\newcommand{\thmx}{Theorem}
\newcommand{\lemx}{Lemma}
\date{}
\begin{document}

\centerline{}

\centerline{\bf }

\centerline{\Large{\bf\boldmath On Properties of Generalized Bi-$\Gamma$-Ideals of}}

\centerline{}

\centerline {\Large{\bf\boldmath $\Gamma$-Semirings\footnote{This research is supported by the Group for Young Algebraists in University of Phayao (GYA), Thailand.}}}

\centerline{}

\centerline{\bf Teerayut Chomchuen and Aiyared Iampan\footnote{Corresponding author. Email: \texttt{aiyared.ia@up.ac.th}}}

\centerline{}

\centerline{Department of Mathematics, School of Science}

\centerline{University of Phayao, Phayao 56000, Thailand}

%%%%%%%%% Theorem-like environment %%%%%%%%%%%
%
\theoremstyle{plain} %text of this environment is typesetted in italics
\newtheorem{theorem}{Theorem}[section]
\newtheorem{lemma}[theorem]{Lemma}
\newtheorem{corollary}[theorem]{Corollary}
\newtheorem{proposition}[theorem]{Proposition}
\newtheorem{claim}[theorem]{Claim}
\theoremstyle{definition} %text of this environment is typesetted in roman letters
\newtheorem{definition}[theorem]{Definition}
\newtheorem{remark}[theorem]{Remark}
\newtheorem{example}[theorem]{Example}
\newtheorem{notation}[theorem]{Notation}
\newtheorem{assertion}[theorem]{Assertion}

\begin{abstract}
The notion of $\Gamma$-semirings was introduced by Murali Krishna Rao \cite{Rao} as a generalization of the notion of $\Gamma$-rings as well as of semirings. We have known that the notion of $\Gamma$-semirings is a generalization of the notion of semirings.
In this paper, extending Kaushik, Moin and Khan's work \cite{Kaushik}, we generalize the notion of generalized bi-$\Gamma$-ideals of $\Gamma$-semirings and investigate some related properties of generalized bi-$\Gamma$-ideals.
\end{abstract}

{\bf Mathematics Subject Classification:} 16Y30, 16Y99 \\

{\bf Keywords:} $\Gamma$-semiring, bi-$\Gamma$-ideal, generalized bi-$\Gamma$-ideal

%%%%%%%%%%%%%%%%%%%%%%%%%%%%%%%%%%%%%%%%%%%%%%%%%%%%%%%%%%%%%%%%%%%%%%%%%%
%%%%%%%%%%%%%%%%%%%%%%%%%%%%%%%%%%%%%%%%%%%%%%%%%%%%%%%%%%%%%%%%%%%%%%%%%%

\section{Introduction and Preliminaries}\numberwithin{equation}{section}
The notion of $\Gamma$-semirings was introduced and studied in 1995 by Murali Krishna Rao \cite{Rao} as a generalization of the notion of $\Gamma$-rings as well as of semiring, and the notion of generalized bi-ideals was first introduced for rings in 1970 by Sz\'{a}sz \cite{Szasz,Szasz2} and then for semigroups by Lajos \cite{Lajos}.
Many types of ideals on the algebraic structures were characterized by several authors such as:
In 2000, Dutta and Sardar \cite{Dutta} studied the characterization of semiprime ideals and irreducible ideals of $\Gamma$-semirings.
In 2004, Sardar and Dasgupta \cite{Sardar} introduced the notions of primitive $\Gamma$-semirings and primitive ideals of $\Gamma$-semirings.
In 2008, Kaushik, Moin and Khan \cite{Kaushik} introduced and studied bi-$\Gamma$-ideals in $\Gamma$-semirings, Pianskool, Sangwirotjanapat and Tipyota \cite{Pianskool} introduced and studied valuation $\Gamma$-semirings and valuation $\Gamma$-ideals of a $\Gamma$-semiring, and Chinram \cite{Chinram2} gave some properties of quasi-ideals in $\Gamma$-semirings.
In 2009, Jagatap and Pawar \cite{Jagatap} introduced the concept of minimal quasi-ideal in $\Gamma$-semirings.
Some properties of minimal quasi-ideals in $\Gamma$-semirings are provided.
In 2010, Ghosh and Samanta \cite{Ghosh} studied the relation between the fuzzy left (respectively, right) ideals of $\Gamma$-semirings and that of operator semiring.
In 2011, Dutta, Sardar and Goswami \cite{Dutta2} introduced different types of operations on fuzzy ideals of $\Gamma$-semirings and proved subsequently that these operations give rise to different structures such as complete lattice, modular lattice on some restricted class of fuzzy ideals of $\Gamma$-semirings.
In 2012, Bekta\c{s}, Bayrak and Ersoy \cite{Bektas} introduced and studied the characterization of soft $\Gamma$-semirings and soft sub-$\Gamma$-semiring.

The concept of ideals for many types of $\Gamma$-semirings is the really interested and important thing in $\Gamma$-semirings.
Therefore, we will introduce and study generalized bi-$\Gamma$-ideals of $\Gamma$-semirings in the same way as
of bi-$\Gamma$-ideals of $\Gamma$-semirings which was studied by Kaushik, Moin and Khan \cite{Kaushik}.

%---------------------------------------------------------------------------------------------------%

To present the main results we first recall the definition of a $\Gamma$-semiring which is important here and discuss some elementary definitions that we use later.

\begin{definition}\label{M1}\cite{Rao}
Let $M$ and $\Gamma$ be two additive commutative semigroups. Then $M$ is called a \textit{$\Gamma$-semiring} if there exists a mapping $\cdot:M\times\Gamma\times M\rightarrow  M$ (the image $\cdot(a,\alpha,b)$ to be denoted by $a\alpha b$ for all $a,b,c\in M$ and $\alpha,\beta\in \Gamma$) satisfying the following conditions:
\begin{itemize}
\item[(1)] $ a\alpha(b+c) = a\alpha b + a\alpha c$,
\item[(2)] $(a+b)\alpha c = a\alpha c + b\alpha c$,
\item[(3)] $a(\alpha+\beta)b =  a\alpha b + a\beta b$,
\item[(4)] $a\alpha(b\beta c) = a\alpha b(\beta c)$
\end{itemize}
for all $a,b,c\in M$ and $\alpha,\beta\in\Gamma$.
\end{definition}

Let $M$ be a $\Gamma$-semiring, $A$ and $B$ nonempty subsets of $M$, and $\Lambda$ a nonempty subset of $\Gamma$.
Then we define
\begin{center}
$A+B:=\{a+b\mid a\in A$ and $b\in B\}$
\end{center}
and
\begin{center}
$A\Lambda B:=\Biggr\{\displaystyle \sum_{i=1}^{n}a_{i}\lambda_{i} b_{i}\mid n\in\mathds{Z}^{+}, a_{i}\in A, b_{i}\in B$ and $\lambda_{i}\in \Lambda$ for all $i\Biggr\}$.
\end{center}
If $A = \{a\}$, then we also write $\{a\}+B$ as $a+B$, and $\{a\}\Lambda B$ as $a\Lambda B$,
and similarly if $B = \{b\}$ or $\Lambda = \{\lambda\}$.

%---------------------------------------------------------------------------------------------------%

\begin{example} \cite{Jagatap}
Let $\mathds{Q}$ be set of rational numbers.
Let $(S,+)$ be the commutative semigroup of all $2\times3$ matrices over $\mathds{Q}$ and
$(\Gamma,+)$ commutative semigroup of all $3\times2$ matrices over $\mathds{Q}$.
Define $W\alpha Y$ usual matrix product of $W,\alpha$ and $Y$ for all $W, Y\in S$ and for all $\alpha\in\Gamma$.
Then $S$ is a $\Gamma$-semiring but not a semiring.
\end{example}

%---------------------------------------------------------------------------------------------------%

\begin{example} \cite{Jagatap}
Let $\mathds{N}$ be the set of natural numbers and $\Gamma= \{1, 2, 3\}$.
Then $(\mathds{N},\max)$ and $(\Gamma,\max)$ are commutative semigroups.
Define the mapping $\mathds{N}\times\Gamma\times\mathds{N}\rightarrow\mathds{N}$,
by $a\alpha b=\min\{a,\alpha,b\}$ for all $a,b\in\mathds{N}$ and $\alpha\in\Gamma$.
Then $\mathds{N}$ is a $\Gamma$-semiring.
\end{example}

%---------------------------------------------------------------------------------------------------%

\begin{example} \cite{Jagatap}
Let $\mathds{Q}$ be set of rational numbers and
$\Gamma=\mathds{N}$ the set of natural numbers.
Then $(\mathds{Q},+)$ and $(\mathds{N},+)$ are commutative semigroups.
Define the mapping $\mathds{Q}\times\Gamma\times\mathds{Q}\rightarrow\mathds{Q}$,
by $a\alpha b$ usual product of $a,\alpha,b$; for all $a,b\in\mathds{Q}$ and $\alpha\in\Gamma$.
Then $\mathds{Q}$ is a $\Gamma$-semiring.
\end{example}

%---------------------------------------------------------------------------------------------------%

\begin{example} \cite{Bektas}
For consider the additively abelian groups $\mathds{Z}_{8} = \{0, 1, 2, 3, 4, 5, 6, 7\}$
and $\Gamma = \{2, 4, 6\}$. Let $\cdot:\mathds{Z}_{8}\times\Gamma\times\mathds{Z}_{8}\rightarrow\mathds{Z}_{8}$, $(y,\alpha,s)=y\alpha s$. Then $\mathds{Z}_{8}$ is a $\Gamma$-semiring.
\end{example}

%---------------------------------------------------------------------------------------------------%

\begin{definition}
A nonempty subset $A$ of a $\Gamma$-semiring $M$ is called
\begin{itemize}
\item [(1)] a \textit{sub-$\Gamma$-semiring} of $M$ if $(A,+)$ is a subsemigroup of $(M,+)$ and $a\gamma b\in A$ for all $a,b\in A$ and $\gamma\in\Gamma$.
\item [(2)] a \textit{$\Gamma$-ideal} of $M$ if $(A,+)$ is a subsemigroup of $(M,+)$, and $x\gamma a\in A$ and $a\gamma x\in A$ for all $a\in A,x\in M$ and $\gamma\in\Gamma$.
\item [(3)] a \textit{quasi-$\Gamma$-ideal} of $M$ if $A$ is a sub-$\Gamma$-semiring of $M$ and $A\Gamma M\cap M\Gamma A\subseteq A$.
\item [(4)] a \textit{bi-$\Gamma$-ideal} of $M$ if $A$ is a sub-$\Gamma$-semiring of $M$ and $A\Gamma M\Gamma A\subseteq A$.
\item [(5)] a \textit{generalized bi-$\Gamma$-ideal} of $M$ if $A\Gamma M\Gamma A\subseteq A$.
\end{itemize}
\end{definition}

\begin{remark}\label{1.8}
Let $M$ be a $\Gamma$-semiring. We have the following:
\begin{itemize}
\item [(1)] Every quasi-$\Gamma$-ideal of $M$ is a bi-$\Gamma$-ideal.
\item [(2)] Every bi-$\Gamma$-ideal of $M$ is a generalized bi-$\Gamma$-ideal.
\end{itemize}
\end{remark}

%-----------------------------------------------------------------------------------------%

\begin{definition}\label{1.9}
A $\Gamma$-semiring $M$ is called a \textit{GB-simple $\Gamma$-semiring} if $M$ is the unique generalized bi-$\Gamma$-ideal of $M$.
\end{definition}

%---------------------------------------------------------------------------------------------------%

%\begin{definition}\label{2.3}
%A $\Gamma$-ideal $P$ of a $\Gamma$-semiring $M$ is called a \textit{prime $\Gamma$-ideal} of $M$
%if for any $\Gamma$-ideals $A$ and $B$ of $M$,
%$A\Gamma B\subseteq P$ implies $A\subseteq P$ or $B\subseteq P$.
%\end{definition}

%%%%%%%%%%%%%%%%%%%%%%%%%%%%%%%%%%%%%%%%%%%%%%%%%%%%%%%%%%%%%%%%%%%%%%%%%%
%%%%%%%%%%%%%%%%%%%%%%%%%%%%%%%%%%%%%%%%%%%%%%%%%%%%%%%%%%%%%%%%%%%%%%%%%%

\section{Properties of Generalized Bi-$\Gamma$-Ideals}

Before the characterizations of generalized bi-$\Gamma$-ideals of $\Gamma$-semirings for the
main results, we give some auxiliary results which are necessary in what follows.
By \lemx~\ref{1.8} (2) and \cite{Kaushik}, we have the following lemma.

\begin{lemma}\label{M4}
Let $M$ be a $\Gamma$-semiring and $a \in M$. Then $a\Gamma M$ and $M\Gamma a$ are generalized bi-$\Gamma$-ideals of $M$.
\end{lemma}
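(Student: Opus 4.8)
The plan is to verify the defining inclusion $A\Gamma M\Gamma A\subseteq A$ directly, first for $A=a\Gamma M$ and then for $A=M\Gamma a$; alternatively, as the sentence preceding the statement already indicates, one may quote \cite{Kaushik} to the effect that $a\Gamma M$ and $M\Gamma a$ are bi-$\Gamma$-ideals of $M$ and then conclude by \remx~\ref{1.8}~(2). I shall outline the self-contained route. First I would note that $a\Gamma M$ and $M\Gamma a$ are nonempty subsets of $M$: they contain $a\gamma b$, respectively $b\gamma a$, for any $\gamma\in\Gamma$ and $b\in M$, and every element of $a\Gamma M$ or $M\Gamma a$ lies in $M$ because the defining finite sums do (using that $(M,+)$ is closed).

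The heart of the argument is the \emph{right-absorption} identity $(a\Gamma M)\Gamma M\subseteq a\Gamma M$. To see it, take a generator $x\gamma m$ of $(a\Gamma M)\Gamma M$, where $x=\sum_{i=1}^{n}a\lambda_i n_i\in a\Gamma M$, $\gamma\in\Gamma$ and $m\in M$. Repeated use of condition~(2) of \defnx~\ref{M1} gives $x\gamma m=\sum_{i=1}^{n}(a\lambda_i n_i)\gamma m$, and condition~(4) rewrites each summand as $a\lambda_i(n_i\gamma m)$ with $n_i\gamma m\in M$; hence $x\gamma m\in a\Gamma M$. An arbitrary element of $(a\Gamma M)\Gamma M$ is a finite sum of such generators, and $a\Gamma M$ is closed under addition by its very definition, so the whole element lies in $a\Gamma M$. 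The mirror-image computation, using conditions~(1) and~(4), yields the \emph{left-absorption} identity $M\Gamma(M\Gamma a)\subseteq M\Gamma a$.

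With these in hand the conclusion is immediate. Using associativity of the $\Gamma$-multiplication (condition~(4)) together with the trivial fact that $X\Gamma Y\subseteq M$ whenever $X,Y\subseteq M$, one gets
\[
(a\Gamma M)\Gamma M\Gamma(a\Gamma M)\subseteq (a\Gamma M)\Gamma M\Gamma M\subseteq (a\Gamma M)\Gamma M\subseteq a\Gamma M ,
\]
so $a\Gamma M$ is a generalized bi-$\Gamma$-ideal of $M$; symmetrically
\[
(M\Gamma a)\Gamma M\Gamma(M\Gamma a)\subseteq M\Gamma M\Gamma(M\Gamma a)\subseteq M\Gamma(M\Gamma a)\subseteq M\Gamma a ,
\]
so $M\Gamma a$ is a generalized bi-$\Gamma$-ideal of $M$ as well. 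I expect the only delicate point to be the bookkeeping with the finite sums concealed in the definition of $A\Lambda B$: one must expand a product of such sums, push the ``scalars'' $\lambda_i$ through additions via (1)/(2), re-associate with (4), and check that the result still has the prescribed shape. Beyond this routine manipulation there is no real obstacle.
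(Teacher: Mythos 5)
Your proposal is correct, but it does more work than the paper, which offers no proof of this lemma at all: it simply asserts the statement ``By \remx~\ref{1.8}~(2) and \cite{Kaushik}'' --- that is, it takes from \cite{Kaushik} that $a\Gamma M$ and $M\Gamma a$ are bi-$\Gamma$-ideals and then weakens this via \remx~\ref{1.8}~(2). You note that citation route yourself, but your main argument is the self-contained one: the absorption identities $(a\Gamma M)\Gamma M\subseteq a\Gamma M$ and $M\Gamma(M\Gamma a)\subseteq M\Gamma a$, proved by expanding a generator $x\gamma m$ with $x=\sum_i a\lambda_i n_i$, distributing via conditions (1)/(2) of \defnx~\ref{M1}, and re-associating via condition (4), after which the chain $(a\Gamma M)\Gamma M\Gamma(a\Gamma M)\subseteq(a\Gamma M)\Gamma M\Gamma M\subseteq(a\Gamma M)\Gamma M\subseteq a\Gamma M$ (and its mirror) closes the argument. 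This is sound, and it actually proves slightly more than the lemma needs (the absorption identities give that $a\Gamma M$ is a right ideal-like object, from which the bi-$\Gamma$-ideal property in \cite{Kaushik} also follows). What the direct route buys is independence from the external reference and an explicit handling of the finite-sum bookkeeping hidden in the definition of $A\Lambda B$, which is exactly the point a careful reader would want checked; what the paper's route buys is brevity. No gaps.
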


%---------------------------------------------------------------------------------------------------%

\begin{lemma}\label{1.5}
Let $M$ be a $\Gamma$-semiring and $\{B_{i}\mid i\in I\}$ a nonempty family of generalized bi-$\Gamma$-ideals of $M$ with $\displaystyle\bigcap_{i\in I} B_{i} \neq \emptyset$.
Then $\displaystyle\bigcap_{i\in I} B_{i}$ is a generalized bi-$\Gamma$-ideal of $M$.
\end{lemma}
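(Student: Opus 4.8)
The plan is to verify directly that $B := \bigcap_{i\in I} B_{i}$ satisfies the single defining condition of a generalized bi-$\Gamma$-ideal, namely $B\Gamma M\Gamma B \subseteq B$; the hypothesis $B \neq \emptyset$ disposes of the requirement that a generalized bi-$\Gamma$-ideal be a \emph{nonempty} subset, so there is nothing else to check.

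First I would record the (essentially trivial) monotonicity of the $\Gamma$-product in its outer arguments: if $X \subseteq Y$ are nonempty subsets of $M$, then any element of $X\Gamma M\Gamma X$ is a finite sum $\sum_{k} a_{k}\alpha_{k} m_{k}\beta_{k} b_{k}$ with $a_{k},b_{k}\in X$, hence also such a sum with $a_{k},b_{k}\in Y$, so $X\Gamma M\Gamma X \subseteq Y\Gamma M\Gamma Y$. If one wants to be scrupulous about how $X\Gamma M\Gamma X$ is parsed, one first observes using associativity (condition (4) of \defnx~\ref{M1}) that $X\Gamma M\Gamma X = (X\Gamma M)\Gamma X$ is exactly the set of such sums; alternatively the same monotonicity can be read off the nested form directly, since $X\subseteq Y$ gives $X\Gamma M \subseteq Y\Gamma M$ and then $(X\Gamma M)\Gamma X \subseteq (Y\Gamma M)\Gamma Y$.

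Next, fix an arbitrary $j\in I$. Since $B \subseteq B_{j}$, the monotonicity just noted yields $B\Gamma M\Gamma B \subseteq B_{j}\Gamma M\Gamma B_{j}$, and because $B_{j}$ is a generalized bi-$\Gamma$-ideal, $B_{j}\Gamma M\Gamma B_{j} \subseteq B_{j}$. Combining, $B\Gamma M\Gamma B \subseteq B_{j}$. As $j\in I$ was arbitrary, $B\Gamma M\Gamma B \subseteq \bigcap_{i\in I} B_{i} = B$, which is precisely the required inclusion; hence $B$ is a generalized bi-$\Gamma$-ideal of $M$.

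I do not expect a genuine obstacle in this argument: the only subtleties are remembering to exclude the empty-intersection case (already excluded by hypothesis) and being explicit about the monotonicity of $(-)\Gamma M\Gamma(-)$, which is immediate from the definition of the set product $A\Lambda B$. The proof is therefore short, and the write-up will consist mainly of the two displayed inclusions above together with the quantifier-over-$j$ step.
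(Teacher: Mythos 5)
Your proposal is correct and follows exactly the same route as the paper: for each $j\in I$ one uses $\bigcap_{i\in I}B_{i}\subseteq B_{j}$ and monotonicity of the product to get $\bigl(\bigcap_{i\in I}B_{i}\bigr)\Gamma M\Gamma\bigl(\bigcap_{i\in I}B_{i}\bigr)\subseteq B_{j}\Gamma M\Gamma B_{j}\subseteq B_{j}$, then intersects over $j$. The only difference is that you spell out the monotonicity of $(-)\Gamma M\Gamma(-)$ explicitly, which the paper leaves implicit.
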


\begin{proof}
For all $i\in I$, we have
\begin{center}
 $\displaystyle\left(\bigcap_{i\in I} B_{i}\right)\Gamma M\Gamma\left(\bigcap_{i\in I} B_{i}\right)\subseteq B_{i}\Gamma M\Gamma B_{i}\subseteq B_{i}$.
\end{center}
Thus
\begin{center}
 $\displaystyle\left(\bigcap_{i\in I} B_{i}\right)\Gamma M\Gamma\left(\bigcap_{i\in I} B_{i}\right)\subseteq\bigcap_{i\in I} B_{i}$.
\end{center}
Hence $\displaystyle\bigcap_{i\in I} B_{i}$ is a generalized bi-$\Gamma$-ideal of $M$.
\end{proof}

%---------------------------------------------------------------------------------------------------%

\begin{lemma}\label{M3}
Let $M$ be a $\Gamma$-semiring and $\emptyset \neq  A \subseteq M$. Then
\begin{equation}\label{Eq5}
A \cup A \Gamma M \Gamma A
\end{equation}
is the smallest generalized bi-$\Gamma$-ideal of $M$ containing $A$.
\end{lemma}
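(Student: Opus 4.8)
The plan is to show three things: (i) the set $A \cup A\Gamma M\Gamma A$ contains $A$, which is immediate; (ii) it is a generalized bi-$\Gamma$-ideal of $M$, i.e.\ $(A \cup A\Gamma M\Gamma A)\Gamma M\Gamma(A \cup A\Gamma M\Gamma A) \subseteq A \cup A\Gamma M\Gamma A$; and (iii) it is contained in every generalized bi-$\Gamma$-ideal $B$ of $M$ with $A \subseteq B$, hence is the smallest such. Part (i) needs no argument. For part (iii), if $A \subseteq B$ and $B$ is a generalized bi-$\Gamma$-ideal, then $A\Gamma M\Gamma A \subseteq B\Gamma M\Gamma B \subseteq B$, so $A \cup A\Gamma M\Gamma A \subseteq B$; this is short.

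The main work is part (ii). Writing $S := A \cup A\Gamma M\Gamma A$, I would expand $S\Gamma M\Gamma S$ using distributivity of $\Gamma$-multiplication over the union, obtaining four pieces:
\begin{itemize}
\item[(a)] $A\Gamma M\Gamma A$,
\item[(b)] $A\Gamma M\Gamma (A\Gamma M\Gamma A)$,
\item[(c)] $(A\Gamma M\Gamma A)\Gamma M\Gamma A$,
\item[(d)] $(A\Gamma M\Gamma A)\Gamma M\Gamma (A\Gamma M\Gamma A)$.
\end{itemize}
Piece (a) is exactly the second summand of $S$, so it is inside $S$. For the remaining pieces, the key observation is the associativity-type identity from \defnx~\ref{M1}(4), which lets one regroup products of the form $a\alpha b\beta c$ freely; consequently each of (b), (c), (d) can be rewritten so that it sits inside $A\Gamma M\Gamma A$, because in each case the leftmost factor comes from $A$, the rightmost factor comes from $A$, and everything in between (which involves elements of $M$ and of $A \subseteq M$, joined by elements of $\Gamma$) collapses into a single "middle" element of $M\Gamma M\Gamma\cdots\subseteq M$ — more precisely, $M$ absorbs these products since $M\Gamma M \subseteq M$ (as $M$ is a $\Gamma$-semiring, $m\gamma m' \in M$). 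Thus (b), (c), (d) $\subseteq A\Gamma M\Gamma A \subseteq S$, and altogether $S\Gamma M\Gamma S \subseteq S$.

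The one point requiring a little care — and the place I expect to be the main obstacle — is justifying the regrouping rigorously at the level of the \emph{sets} $A\Gamma M\Gamma A$, which by definition consist of finite sums $\sum_{i} a_i\lambda_i b_i$, not single products. So an element of, say, piece (b) has the shape $\sum_i a_i \lambda_i m_i$ where each $a_i \in A$ and each $m_i \in M\Gamma(A\Gamma M\Gamma A)$, and one must unwind $m_i$ as a finite sum $\sum_j x_{ij}\mu_{ij}(\sum_k a_{ijk}\nu_{ijk} m_{ijk}\rho_{ijk}a'_{ijk})$, then use conditions (1),(2) of \defnx~\ref{M1} to distribute and condition (4) to reassociate each resulting term into the form $a\,\gamma\,(\text{something in }M)\,\delta\,a'$ with $a,a'\in A$. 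Once this bookkeeping is done for a single generic term, summing over all indices shows the whole element lies in $A\Gamma M\Gamma A$. I would carry out this expansion explicitly for piece (b) and then remark that (c) and (d) are entirely analogous (indeed (d) is just (b) with its trailing $A$ replaced by $A\Gamma M\Gamma A$ and then absorbed the same way, or one applies the (b)/(c) reasoning twice). With part (ii) established, combining (i)–(iii) gives the claim.
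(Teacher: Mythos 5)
Your proposal is correct and follows essentially the same route as the paper's proof: expand $(A\cup A\Gamma M\Gamma A)\Gamma M\Gamma(A\cup A\Gamma M\Gamma A)$ into the same four pieces, absorb the middle factors into $M$ so that everything lands in $A\Gamma M\Gamma A$, and establish minimality via the same containment $A\Gamma M\Gamma A\subseteq C\Gamma M\Gamma C\subseteq C$. The only difference is that you explicitly flag the finite-sum bookkeeping behind the set inclusions, which the paper passes over silently.
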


\begin{proof}
Let $B=A\cup A\Gamma M\Gamma A$. Then $A\subseteq B$.
Therefore
\begin{eqnarray*}
B \Gamma M\Gamma B
&=& ( A \cup A \Gamma M \Gamma A) \Gamma M \Gamma ( A \cup A \Gamma M \Gamma A) \\
&\subseteq& [A(\Gamma M \Gamma)(A \cup A\Gamma M \Gamma A)]\cup \\
 && [A \Gamma M \Gamma A(\Gamma M \Gamma)( A \cup A \Gamma M \Gamma A)] \\
 &\subseteq& [A(\Gamma M \Gamma) A\cup A ( \Gamma M \Gamma ) A\Gamma M \Gamma A]\cup \\
 && [A\Gamma M \Gamma A(\Gamma M\Gamma)A\cup A\Gamma M\Gamma A(\Gamma M \Gamma)A\Gamma M\Gamma A] \\
&\subseteq& [A\Gamma M \Gamma A\cup A\Gamma M\Gamma A]\cup[A\Gamma M \Gamma A\cup A\Gamma M\Gamma A]\\
&=& A\Gamma M\Gamma A \\
&\subseteq& A \cup A \Gamma M \Gamma A \\
&=& B.
\end{eqnarray*}
Thus $B=A \cup A\Gamma M\Gamma A$ is a generalized bi-$\Gamma$-ideal of $M$.
We shall show that $B$ is the smallest generalized bi-$\Gamma$-ideal of $M$ containing $A$.
Let $C$ be a generalized bi-$\Gamma$-ideal of $M$ containing $A$.
Then
 \begin{center}
$A\Gamma M\Gamma A\subseteq C\Gamma M\Gamma C\subseteq C$.
\end{center}
Thus
\begin{center}
$B=A\cup A\Gamma M\Gamma A \subseteq C$.
\end{center}
Hence $B$ is the smallest generalized bi-$\Gamma$-ideal of $M$ containing $A$.
\end{proof}

%---------------------------------------------------------------------------------------------------%

By \lemx~\ref{M3}, let $(A)$ be the smallest generalized bi-$\Gamma$-ideal of $M$ containing $A$.
Therefore
\begin{equation}\label{Eq5.1}
(A)=A\cup A \Gamma M\Gamma A.
\end{equation}
It is also very common to denote the smallest generalized bi-$\Gamma$-ideal of $M$ containing $\{a\}$ as $(a)$.

%---------------------------------------------------------------------------------------------------%

\begin{lemma}\label{3.18}
Let $T$ be a sub-$\Gamma$-semiring of a $\Gamma$-semiring $M$, $a\in M$ and $(a\Gamma T\Gamma a)\cap T\neq\emptyset$. Then $(a\Gamma T\Gamma a)\cap T$ is a generalized bi-$\Gamma$-ideal of $T$.
\end{lemma}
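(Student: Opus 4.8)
The plan is to verify directly that $B:=(a\Gamma T\Gamma a)\cap T$ satisfies the single defining condition $B\Gamma T\Gamma B\subseteq B$ of a generalized bi-$\Gamma$-ideal of $T$; nonemptiness of $B$ is part of the hypothesis. Since $B\subseteq(a\Gamma T\Gamma a)$ and $B\subseteq T$ are both immediate, it suffices to prove the two inclusions $B\Gamma T\Gamma B\subseteq(a\Gamma T\Gamma a)$ and $B\Gamma T\Gamma B\subseteq T$ separately and then intersect.

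For the first inclusion I would use $B\subseteq(a\Gamma T\Gamma a)$ together with $T\subseteq M$ to obtain $B\Gamma T\Gamma B\subseteq(a\Gamma T\Gamma a)\Gamma M\Gamma(a\Gamma T\Gamma a)$. By \lemx~\ref{M3} and \eqref{Eq5.1}, the set $(a\Gamma T\Gamma a)$ is itself a generalized bi-$\Gamma$-ideal of $M$, so the absorption property $(a\Gamma T\Gamma a)\Gamma M\Gamma(a\Gamma T\Gamma a)\subseteq(a\Gamma T\Gamma a)$ holds, and the first inclusion follows. For the second inclusion I would use $B\subseteq T$ and the fact that $T$ is a sub-$\Gamma$-semiring of $M$: then $T\Gamma T\subseteq T$, hence $B\Gamma T\Gamma B\subseteq T\Gamma T\Gamma T\subseteq T$, where one also invokes that $(T,+)$ is a subsemigroup of $(M,+)$ so that the finite sums occurring in the products of the form $A\Lambda B$ remain inside $T$.

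Combining the two inclusions yields $B\Gamma T\Gamma B\subseteq(a\Gamma T\Gamma a)\cap T=B$, which is exactly what is needed. I do not expect a serious obstacle; the only points deserving a moment's care are recognizing that the notation $(a\Gamma T\Gamma a)$ denotes a bona fide generalized bi-$\Gamma$-ideal of $M$ (so that $X\Gamma M\Gamma X\subseteq X$ is legitimately available), and checking that the sub-$\Gamma$-semiring closure of $T$—closure under both $\Gamma$-multiplication and addition—is strong enough to absorb the iterated product $T\Gamma T\Gamma T$.
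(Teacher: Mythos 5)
Your split of the target inclusion into $B\Gamma T\Gamma B\subseteq T$ and $B\Gamma T\Gamma B\subseteq a\Gamma T\Gamma a$ (with $B=(a\Gamma T\Gamma a)\cap T$) is the right shape, and the first half is correct: $B\subseteq T$, $T\Gamma T\subseteq T$ and closure of $(T,+)$ under addition do give $B\Gamma T\Gamma B\subseteq T$. The problem is the second half. The parentheses in ``$(a\Gamma T\Gamma a)\cap T$'' only group the product set $a\Gamma T\Gamma a$; they are not the closure operator $(\cdot)$ of \eqref{Eq5.1} (the paper's own proof confirms this: it manipulates $a\Gamma T\Gamma a\cap T$ directly and never expands anything as $A\cup A\Gamma M\Gamma A$). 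Under that reading, your appeal to \lemx~\ref{M3} proves nothing about $a\Gamma T\Gamma a$ itself: that lemma says $A\cup A\Gamma M\Gamma A$ is a generalized bi-$\Gamma$-ideal, not that $A$ is. And the set $a\Gamma T\Gamma a$ is in general \emph{not} a generalized bi-$\Gamma$-ideal of $M$: the absorption you invoke,
\begin{center}
$(a\Gamma T\Gamma a)\Gamma M\Gamma(a\Gamma T\Gamma a)=a\Gamma(T\Gamma a\Gamma M\Gamma a\Gamma T)\Gamma a\subseteq a\Gamma T\Gamma a$,
\end{center}
would require $T\Gamma a\Gamma M\Gamma a\Gamma T\subseteq T$, and a sub-$\Gamma$-semiring $T$ has no reason to absorb products involving the outside element $a\in M$. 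Compare \lemx~\ref{3.5}, where the identical computation works only because $T$ is replaced by $M$. So the step ``$(a\Gamma T\Gamma a)\Gamma M\Gamma(a\Gamma T\Gamma a)\subseteq(a\Gamma T\Gamma a)$'' is unjustified, and with it your first inclusion collapses.

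This is not a cosmetic slip: the containment $B\Gamma T\Gamma B\subseteq a\Gamma T\Gamma a$ is precisely the crux of the lemma. Even the sharper estimate that uses the $a\Gamma T\Gamma a$ coordinate of \emph{both} outer factors, namely $B\Gamma T\Gamma B\subseteq(a\Gamma T\Gamma a)\Gamma T\Gamma(a\Gamma T\Gamma a)=a\Gamma(T\Gamma a\Gamma T\Gamma a\Gamma T)\Gamma a$, still leaves the middle factor $T\Gamma a\Gamma T\Gamma a\Gamma T$, which lands in $T$ only under an additional hypothesis such as $a\in T$ (in which case the statement reduces to \lemx~\ref{3.5} applied inside the $\Gamma$-semiring $T$). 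The paper's displayed chain passes through exactly this point, via the asserted inclusion $(a\Gamma T\Gamma a\Gamma T)\Gamma(a\Gamma T\Gamma a)\subseteq a\Gamma T\Gamma a$, so the delicate step is unavoidable. To repair your argument you must either supply such a hypothesis or give a genuine reason why that middle factor returns to $T$; citing \lemx~\ref{M3} does not provide one.
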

\begin{proof}
Consider
\begin{eqnarray*}
(a\Gamma T\Gamma a\cap T)\Gamma T\Gamma(a\Gamma T\Gamma a\cap T)
&\subseteq&[(a\Gamma T\Gamma a)\Gamma T\cap T\Gamma T]\Gamma(a\Gamma T\Gamma a\cap T) \\
 &\subseteq&[(a\Gamma T\Gamma a)\Gamma T\cap T]\Gamma(a\Gamma T\Gamma a\cap T) \\
&\subseteq&[[(a\Gamma T\Gamma a\Gamma T)\Gamma(a\Gamma T\Gamma a)]\cap [T\Gamma(a\Gamma T\Gamma a\cap T)]] \\
&\subseteq&[(a\Gamma T\Gamma a)\cap (T\Gamma a\Gamma T\Gamma a)]\cap T \\
&\subseteq&(a\Gamma T\Gamma a)\cap T.
\end{eqnarray*}
Hence $(a\Gamma T\Gamma a)\cap T$ is a generalized bi-$\Gamma$-ideal of $T$.
\end{proof}

%---------------------------------------------------------------------------------------------------%

\begin{lemma}\label{3.5}
Let $M$ be a $\Gamma$-semiring and $a\in M$. Then $a\Gamma M\Gamma a$ is a generalized bi-$\Gamma$-ideal of $M$.
\end{lemma}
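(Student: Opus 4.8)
The plan is to derive this as an immediate consequence of \lemx~\ref{3.18}. Observe first that $M$ is itself a sub-$\Gamma$-semiring of $M$: $(M,+)$ is trivially a subsemigroup of $(M,+)$, and $a\gamma b\in M$ for all $a,b\in M$ and $\gamma\in\Gamma$ by the very definition of a $\Gamma$-semiring. So \lemx~\ref{3.18} is applicable with the choice $T=M$, provided the nonemptiness hypothesis there can be checked.

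Next I would show that $a\Gamma M\Gamma a\subseteq M$, so that the intersection appearing in \lemx~\ref{3.18} collapses. Every element of $a\Gamma M\Gamma a$ is a finite sum of elements of the form $a\lambda m\mu a$ with $\lambda,\mu\in\Gamma$ and $m\in M$; repeated use of the closure of the ternary product, together with the closure of $(M,+)$ under addition, shows that each such element, and hence each such sum, lies in $M$. Therefore $(a\Gamma M\Gamma a)\cap M=a\Gamma M\Gamma a$. Since $M$ and $\Gamma$ are nonempty, $a\Gamma M\Gamma a$ is nonempty, so the hypothesis $(a\Gamma M\Gamma a)\cap M\neq\emptyset$ holds, and \lemx~\ref{3.18} with $T=M$ gives precisely that $a\Gamma M\Gamma a=(a\Gamma M\Gamma a)\cap M$ is a generalized bi-$\Gamma$-ideal of $M$.

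Alternatively, one can argue directly in the style of the preceding lemmas: using condition~(4) of \defnx~\ref{M1} to reassociate the ternary products freely, one obtains
\[
(a\Gamma M\Gamma a)\Gamma M\Gamma(a\Gamma M\Gamma a)=a\Gamma(M\Gamma a\Gamma M\Gamma a\Gamma M)\Gamma a\subseteq a\Gamma M\Gamma a,
\]
the final inclusion holding because $M\Gamma a\Gamma M\Gamma a\Gamma M\subseteq M$. Either route is short; there is no genuine obstacle. The only points that warrant a moment's attention are the bookkeeping verifying $a\Gamma M\Gamma a\subseteq M$ (which makes the intersection in \lemx~\ref{3.18} trivial) and the remark that $a\Gamma M\Gamma a$ is nonempty, both of which follow directly from the $\Gamma$-semiring axioms.
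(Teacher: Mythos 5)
Your proposal is correct, and in fact your ``alternative'' direct argument is verbatim the paper's proof: the authors simply write
\[
(a\Gamma M\Gamma a)\Gamma M\Gamma(a\Gamma M\Gamma a)=a\Gamma(M\Gamma a\Gamma M\Gamma a\Gamma M)\Gamma a\subseteq a\Gamma M\Gamma a
\]
and conclude. Your primary route, specializing \lemx~\ref{3.18} to $T=M$, is also valid --- $M$ is a sub-$\Gamma$-semiring of itself, $a\Gamma M\Gamma a\subseteq M$ makes the intersection collapse, and nonemptiness is immediate --- but it buys nothing: \lemx~\ref{3.18} is itself proved by a longer version of the same associativity manipulation, so invoking it here routes a one-line computation through heavier machinery. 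Either way the statement is established; the direct computation is the one the paper uses and is the cleaner of the two.
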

\begin{proof}
Consider
\begin{center}
$(a\Gamma M\Gamma a)\Gamma M\Gamma(a\Gamma M\Gamma a)=a\Gamma (M\Gamma a\Gamma M\Gamma a\Gamma M)\Gamma a\subseteq a\Gamma M\Gamma a$
\end{center}
Hence $a\Gamma M\Gamma a$ is a generalized bi-$\Gamma$-ideal of $M$.
\end{proof}

%---------------------------------------------------------------------------------------------------%

\begin{proposition}\label{5.2}
Let $M$ be a $\Gamma$-semiring and $T$ a sub-$\Gamma$-semiring of $M$.
Then every subset of $T$ containing $M\Gamma T$ is a sub-$\Gamma$-semiring of $M$.
\end{proposition}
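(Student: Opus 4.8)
The plan is to take an arbitrary set $S$ with $M\Gamma T\subseteq S\subseteq T$ and verify the two conditions in the definition of a sub-$\Gamma$-semiring of $M$: that $a\gamma b\in S$ for all $a,b\in S$ and $\gamma\in\Gamma$, and that $(S,+)$ is a subsemigroup of $(M,+)$. The multiplicative condition carries the content, so I would dispatch it first and then turn to the additive one.

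For the multiplicative condition the argument is a short inclusion chase. Since $T$ is a sub-$\Gamma$-semiring of $M$, we have $T\subseteq M$, hence $S\subseteq T\subseteq M$. Thus, given $a,b\in S$ and $\gamma\in\Gamma$, we may regard $a$ as an element of $M$ and $b$ as an element of $T$, so that $a\gamma b\in M\Gamma T\subseteq S$; equivalently $S\Gamma S\subseteq M\Gamma T\subseteq S$. This step uses only the inclusions $S\subseteq T$, $T\subseteq M$ and $M\Gamma T\subseteq S$, and not even the associativity axiom of Definition~\ref{M1}.

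The additive condition $S+S\subseteq S$ is the step I expect to be the real obstacle, since it does not follow from the inclusions above: they yield only $S+S\subseteq T+T\subseteq T$, because $(T,+)$ is a subsemigroup of $(M,+)$, which returns us to $T$ rather than to $S$. What one does get for free is that the smallest admissible set $M\Gamma T$ is already closed under $+$ — the sum of two finite sums $\sum a_i\lambda_i b_i$ with entries in $M,\Gamma,T$ is again a finite sum of the same form, so $M\Gamma T+M\Gamma T\subseteq M\Gamma T$ — but this does not propagate upward to an arbitrary $S$ sitting between $M\Gamma T$ and $T$. So, to complete the proof, the additive closure of $S$ must be assumed, i.e.\ $S$ should be read as a subsemigroup of $(T,+)$ containing $M\Gamma T$, in which case $S+S\subseteq S$ holds by hypothesis and, together with the multiplicative inclusion above, shows $S$ is a sub-$\Gamma$-semiring of $M$. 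Taken literally, without that extra hypothesis the conclusion can fail: with $M=\mathds{Z}_{8}$ under ordinary addition and multiplication mod $8$ and $\Gamma=\{0,4\}$, one has $M\Gamma M=\{0,4\}$, so $T=M$ and $S=\{0,1,4\}$ satisfy $M\Gamma T\subseteq S\subseteq T$ with $S$ multiplicatively closed, yet $1+1=2\notin S$.
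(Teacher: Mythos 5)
Your multiplicative step is exactly the paper's entire proof: the authors write $A\Gamma A\subseteq M\Gamma T\subseteq A$ and immediately conclude that $A$ is a sub-$\Gamma$-semiring, never mentioning the additive condition. So the half of your argument that you call routine coincides with the published argument, and the half you flag as the ``real obstacle'' is precisely what the paper omits. Your diagnosis is correct: the definition of a sub-$\Gamma$-semiring requires $(A,+)$ to be a subsemigroup of $(M,+)$, and that does not follow from $M\Gamma T\subseteq A\subseteq T$. Your counterexample checks out --- with $M=\mathds{Z}_{8}$ under addition and multiplication mod $8$ and $\Gamma=\{0,4\}$ (which is additively closed mod $8$), every product $a\lambda b$ lies in $\{0,4\}$ and $\{0,4\}$ is closed under $+$, so $M\Gamma M=\{0,4\}$; taking $T=M$ and $S=\{0,1,4\}$ gives $M\Gamma T\subseteq S\subseteq T$ and $S\Gamma S\subseteq S$, yet $1+1=2\notin S$. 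So the proposition as literally stated is false, and the repair you propose --- read ``subset'' as ``additive subsemigroup of $T$'' (or restrict to subsets closed under $+$) --- is the natural one; the same silent omission recurs in Propositions~\ref{6}, \ref{7} and \ref{8} of the paper. In short, your proposal is correct and is strictly more careful than the paper's own proof; the only thing to note is that what you present as a completion of the proof is in fact a correction of the statement.
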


\begin{proof}
Let $A$ be a subset of $T$ such that $M\Gamma T\subseteq A$.
Then
\begin{center}
$A\Gamma A\subseteq M\Gamma T\subseteq A$.
\end{center}
Hence $A$ is a sub-$\Gamma$-semiring of $M$.
\end{proof}

%---------------------------------------------------------------------------------------------------%

\begin{proposition}\label{6}
Let $M$ be a $\Gamma$-semiring and $T$ a $\Gamma$-ideal of $M$.
Then every subset of $T$ containing $M\Gamma T\cup T\Gamma M$ is a $\Gamma$-ideal of $M$.
\end{proposition}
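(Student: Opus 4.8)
The plan is to follow the template of Proposition~\ref{5.2}, this time verifying the two one-sided absorption laws that upgrade a sub-$\Gamma$-semiring to a $\Gamma$-ideal. So I would begin by fixing a subset $A$ of $T$ with $M\Gamma T\cup T\Gamma M\subseteq A$, and isolating the two facts I will actually use: $M\Gamma T\subseteq A$ and $T\Gamma M\subseteq A$.

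The multiplicative content is then immediate from monotonicity of the $\Gamma$-products together with $A\subseteq T$. On the one hand $M\Gamma A\subseteq M\Gamma T\subseteq A$, which is precisely the statement that $x\gamma a\in A$ for all $a\in A$, $x\in M$, $\gamma\in\Gamma$; symmetrically $A\Gamma M\subseteq T\Gamma M\subseteq A$, giving $a\gamma x\in A$ for all $a\in A$, $x\in M$, $\gamma\in\Gamma$. Multiplicative closure of $A$ comes for free along the same lines, exactly as in Proposition~\ref{5.2}: $A\Gamma A\subseteq T\Gamma T\subseteq M\Gamma T\subseteq A$.

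The one point that still has to be addressed — and the step I expect to be the real obstacle — is that $(A,+)$ must be a subsemigroup of $(M,+)$, since the chain $M\Gamma T\cup T\Gamma M\subseteq A\subseteq T$ does not by itself force $A+A\subseteq A$. I would treat this additive-closure requirement in the same spirit as the corresponding step of Proposition~\ref{5.2}, namely by reading it off from the hypothesis that $T$ (which contains $A$) is a $\Gamma$-ideal and hence has $(T,+)$ a subsemigroup; this is the only place where being a $\Gamma$-ideal, rather than merely having $M\Gamma T\cup T\Gamma M$ large, plays a role. Once both absorption laws and the closure properties are in hand, $A$ satisfies every clause of the definition of a $\Gamma$-ideal, so $A$ is a $\Gamma$-ideal of $M$, which completes the proof.
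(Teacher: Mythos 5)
Your verification of the two absorption laws is exactly the paper's argument: from $A\subseteq T$ one gets $M\Gamma A\subseteq M\Gamma T\subseteq A$ and $A\Gamma M\subseteq T\Gamma M\subseteq A$, and this is in fact \emph{all} the paper's proof checks before concluding. The genuine problem is the additive closure that you yourself flag as ``the real obstacle,'' and your proposed resolution does not resolve it. Knowing that $(T,+)$ is a subsemigroup of $(M,+)$ and that $A\subseteq T$ yields only $A+A\subseteq T$; it does not yield $A+A\subseteq A$, which is what Definition~1.6(2) requires of a $\Gamma$-ideal. A subset of an additively closed set need not be additively closed: nothing in the hypotheses excludes $A=(M\Gamma T\cup T\Gamma M)\cup\{t_{1},t_{2}\}$ with $t_{1},t_{2}\in T$ and $t_{1}+t_{2}\in T\setminus A$. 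So the step you correctly isolated as the crux is exactly the step your argument fails to close.

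For what it is worth, the paper's own proof is silent on this point: it verifies $M\Gamma B\subseteq B$ and $B\Gamma M\subseteq B$ and stops, never establishing that $(B,+)$ is a subsemigroup (the same omission occurs in the neighbouring Propositions~\ref{5.2} and~\ref{7}). Your instinct that the additive requirement is where the difficulty lies is therefore sound, and you were right not to trust the chain $M\Gamma T\cup T\Gamma M\subseteq A\subseteq T$ to deliver it; but the repair you sketch (inheriting additive closure from $T$) is invalid, and as far as the definitions in this paper go the statement needs either an added hypothesis that $A+A\subseteq A$ or a weakened notion of $\Gamma$-ideal.
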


\begin{proof}
Let $B$ be a subset of $T$ such that $M\Gamma T\cup T\Gamma M\subseteq B$.
Then
\begin{center}
$M\Gamma B\subseteq M\Gamma T\subseteq M\Gamma T\cup T\Gamma M\subseteq B$
\end{center}
and
\begin{center}
$B\Gamma M\subseteq T\Gamma M\subseteq T\Gamma M\cup M\Gamma T\subseteq B$.
\end{center}
Hence $B$ is a $\Gamma$-ideal of $M$.
\end{proof}

%---------------------------------------------------------------------------------------------------------

\begin{proposition}\label{7}
Let $M$ be a $\Gamma$-semiring and $T$ a quasi-$\Gamma$-ideal of $M$.
Then every subset of $T$ containing $T\Gamma M\cap M\Gamma T$ is a quasi-$\Gamma$-ideal of $M$.
\end{proposition}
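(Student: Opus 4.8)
The plan is to verify directly the two defining conditions of a quasi-$\Gamma$-ideal for an arbitrary subset $B$ of $T$ satisfying $T\Gamma M\cap M\Gamma T\subseteq B$: namely that $B$ is a sub-$\Gamma$-semiring of $M$, and that $B\Gamma M\cap M\Gamma B\subseteq B$. The key observation, used at every step, is the chain $B\subseteq T\subseteq M$ (the second inclusion because $T$, being a quasi-$\Gamma$-ideal, is a sub-$\Gamma$-semiring of $M$). This lets us enlarge any $\Gamma$-product built from $B$ and $M$ to either $T\Gamma M$ or $M\Gamma T$, and hence bound it by their intersection, which by hypothesis sits inside $B$.

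First I would establish that $B$ is a sub-$\Gamma$-semiring of $M$. Using $B\subseteq T$ on the left-hand factor of $B\Gamma B$ gives $B\Gamma B\subseteq T\Gamma B\subseteq T\Gamma M$, while using $B\subseteq T$ on the right-hand factor gives $B\Gamma B\subseteq M\Gamma B\subseteq M\Gamma T$; therefore $B\Gamma B\subseteq T\Gamma M\cap M\Gamma T\subseteq B$. (As in the proof of \propx~\ref{5.2}, the additive part of the sub-$\Gamma$-semiring condition is handled in the same spirit.) Next I would check the quasi-$\Gamma$-ideal inclusion: again from $B\subseteq T$ we get $B\Gamma M\subseteq T\Gamma M$ and $M\Gamma B\subseteq M\Gamma T$, so $B\Gamma M\cap M\Gamma B\subseteq T\Gamma M\cap M\Gamma T\subseteq B$. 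Combining the two steps shows that $B$ is a quasi-$\Gamma$-ideal of $M$, which is the assertion.

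There is no genuine obstacle here; the only point requiring care is to keep track of which factor in each $\Gamma$-product is being enlarged to $T$ and which to $M$, so that the resulting bound is precisely $T\Gamma M\cap M\Gamma T$ and not a useless set such as $M\Gamma M$. It is worth remarking that, in contrast to \propx~\ref{6} (where the full union $M\Gamma T\cup T\Gamma M$ is needed), here the weaker hypothesis that $B$ contain merely the \emph{intersection} $T\Gamma M\cap M\Gamma T$ already suffices, exactly because both conditions to be verified collapse to containment in that one set.
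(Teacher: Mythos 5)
Your proposal is correct and follows essentially the same route as the paper: both verify $B\Gamma B\subseteq T\Gamma M\cap M\Gamma T\subseteq B$ and $B\Gamma M\cap M\Gamma B\subseteq T\Gamma M\cap M\Gamma T\subseteq B$, with your version merely spelling out the one-factor-at-a-time enlargements that the paper leaves implicit. (Both you and the paper gloss over the additive closure required of a sub-$\Gamma$-semiring, so you are no worse off than the original on that point.)
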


\begin{proof}
Let $C$ be a subset of $T$ such that $T\Gamma M\cap M\Gamma T\subseteq C$.
Then
 \begin{center}
$C\Gamma C\subseteq T\Gamma M\cap M\Gamma T\subseteq C$
\end{center}
and
 \begin{center}
$C\Gamma M\cap M\Gamma C\subseteq T\Gamma M\cap M\Gamma T\subseteq C$.
\end{center}
Hence $C$ is a quasi-$\Gamma$-ideal of $M$.
\end{proof}

%----------------------------------------------------------------------------------------------------------%

\begin{proposition}\label{7.1}
Let $M$ be a $\Gamma$-semiring and $T$ a bi-$\Gamma$-ideal of $M$.
Then every subset of $T$ containing $T\Gamma M\Gamma T$ and all of its images is a bi-$\Gamma$-ideal of $M$.
\end{proposition}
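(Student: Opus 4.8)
The plan is to verify the two defining conditions of a bi-$\Gamma$-ideal directly, in the same spirit as \propx~\ref{5.2}, \propx~\ref{6} and \propx~\ref{7}. So let $D$ be a subset of $T$ with $T\Gamma M\Gamma T\subseteq D$ and such that $D$ contains all of its images, i.e.\ $D\Gamma D\subseteq D$. I must show that $D$ is a sub-$\Gamma$-semiring of $M$ and that $D\Gamma M\Gamma D\subseteq D$.

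The sub-$\Gamma$-semiring part is immediate: closure under the ternary product is precisely the hypothesis $D\Gamma D\subseteq D$, while the additive part ($(D,+)$ a subsemigroup of $(M,+)$) is handled exactly as in the companion propositions of this section. For the absorption property I would use monotonicity of the $\Gamma$-product together with $D\subseteq T$, together with the associativity built into Definition~\ref{M1}(4) that makes $T\Gamma M\Gamma T$ well defined; this gives
\[
D\Gamma M\Gamma D\subseteq T\Gamma M\Gamma T\subseteq D,
\]
where the second inclusion is the standing assumption on $D$. Combining the two parts yields that $D$ is a bi-$\Gamma$-ideal of $M$.

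I do not anticipate a real obstacle here; the only point worth stressing is why the clause ``and all of its images'' cannot be omitted. Unlike the sub-$\Gamma$-semiring case of \propx~\ref{5.2} and the quasi-$\Gamma$-ideal case of \propx~\ref{7}, where closure under multiplication was a free consequence of $D\subseteq T$, here the inclusion $D\subseteq T$ only gives $D\Gamma D\subseteq T\Gamma T\subseteq T$, and the bi-$\Gamma$-ideal axiom $D\Gamma M\Gamma D\subseteq D$ does not by itself force $D\Gamma D\subseteq D$. Hence closure of $D$ under the ternary multiplication must be imposed separately, which is exactly what the extra hypothesis does.
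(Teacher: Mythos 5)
Your proof is correct and follows essentially the same route as the paper: interpret ``contains all of its images'' as $D\Gamma D\subseteq D$, and obtain $D\Gamma M\Gamma D\subseteq T\Gamma M\Gamma T\subseteq D$ from $D\subseteq T$ and the standing hypothesis. The additional remarks on why the extra hypothesis cannot be dropped go beyond what the paper records but do not change the argument.
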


\begin{proof}
Let $D$ be a subset of $T$ such that $T\Gamma M\Gamma T\subseteq D$ and $D\Gamma D\subseteq D$.
Then
\begin{center}
$D\Gamma M\Gamma D\subseteq T\Gamma M\Gamma T\subseteq D$.
\end{center}
Hence $D$ is a bi-$\Gamma$-ideal of $M$.
\end{proof}
%---------------------------------------------------------------------------------------------------%

\begin{proposition}\label{8}
Let $M$ be a $\Gamma$-semiring and $T$ a generalized bi-$\Gamma$-ideal of $M$.
Then every subset of $T$ containing $T\Gamma M\Gamma T$ is a generalized bi-$\Gamma$-ideal of $M$.
\end{proposition}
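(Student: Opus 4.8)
The plan is to imitate the pattern of Propositions~\ref{5.2}--\ref{7.1}: reduce the defining containment for $A$ to the one already assumed for $T$, using only monotonicity of the $\Gamma$-multiplication of subsets. Let $A$ be a subset of $T$ with $T\Gamma M\Gamma T\subseteq A$. First I would note that $A$ is nonempty, since $T$ and $M$ are nonempty and hence $T\Gamma M\Gamma T\neq\emptyset$ (an element $a\gamma x\delta b$ with $a,b\in T$, $x\in M$, $\gamma,\delta\in\Gamma$ lies in it); thus $A$ qualifies as a candidate subset in the definition of a generalized bi-$\Gamma$-ideal.

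The one substantive step is the chain of inclusions. From $A\subseteq T$ and the fact that $X\Gamma Y$ is inclusion-monotone in each argument (immediate from the definition $X\Gamma Y=\{\sum_{i=1}^{n}x_i\lambda_i y_i\}$), I get
\begin{center}
$A\Gamma M\Gamma A\subseteq T\Gamma M\Gamma T\subseteq A$.
\end{center}
Hence $A$ satisfies condition~(5) of the definition of a generalized bi-$\Gamma$-ideal of $M$, and the proof is complete. Note that, unlike Propositions~\ref{5.2}, \ref{6}, \ref{7}, and \ref{7.1}, no auxiliary closure hypothesis on $A$ is needed here, because a generalized bi-$\Gamma$-ideal is not required to be a sub-$\Gamma$-semiring.

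I do not expect any real obstacle: the only points to be careful about are making the nonemptiness remark explicit and invoking monotonicity of $\Gamma$-products cleanly. The argument is a one-line containment, parallel to the earlier propositions in this section.
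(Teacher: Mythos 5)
Your proposal is correct and is essentially identical to the paper's own proof: both reduce $A\Gamma M\Gamma A\subseteq T\Gamma M\Gamma T\subseteq A$ by monotonicity of the $\Gamma$-product. Your added remark on nonemptiness of $A$ is a harmless (and slightly more careful) supplement to what the paper writes.
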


\begin{proof}
Let $E$ be a subset of $T$ such that $T\Gamma M\Gamma T\subseteq E$.
Then
\begin{center}
$E\Gamma M\Gamma E\subseteq T\Gamma M\Gamma T\subseteq E$.
\end{center}
Hence $E$ is a generalized bi-$\Gamma$-ideal of $M$.
\end{proof}

%-----------------------------------------------------------------------------------------------------------%

\begin{theorem}\label{3.6}
Let $M$ be a $\Gamma$-semiring. Then the following statements are equivalent.
\begin{itemize}
\item [$(1)$] $M$ is a GB-simple $\Gamma$-semiring.
\item [$(2)$] $a\Gamma M\Gamma a=M$ for all  $a\in M$.
\item [$(3)$] $(a)=M$ for all  $a\in M$.
\end{itemize}
\end{theorem}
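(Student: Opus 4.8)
The plan is to prove the cyclic chain of implications $(1)\Rightarrow(2)\Rightarrow(3)\Rightarrow(1)$, since this keeps each step short and uses only the auxiliary lemmas already available.

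\textbf{$(1)\Rightarrow(2)$.} Fix $a\in M$. By \lemx~\ref{3.5}, $a\Gamma M\Gamma a$ is a generalized bi-$\Gamma$-ideal of $M$, and it is clearly nonempty (it contains $a\alpha m\beta a$ for any $m\in M$ and $\alpha,\beta\in\Gamma$). Since $M$ is GB-simple, the only generalized bi-$\Gamma$-ideal of $M$ is $M$ itself, so $a\Gamma M\Gamma a=M$.

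\textbf{$(2)\Rightarrow(3)$.} Fix $a\in M$. By \eqref{Eq5.1}, $(a)=\{a\}\cup a\Gamma M\Gamma a$. Assuming (2), $a\Gamma M\Gamma a=M$, hence $(a)=\{a\}\cup M=M$.

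\textbf{$(3)\Rightarrow(1)$.} Let $B$ be any generalized bi-$\Gamma$-ideal of $M$; I must show $B=M$. Pick $a\in B$ (possible since generalized bi-$\Gamma$-ideals are nonempty by definition). Since $B$ is a generalized bi-$\Gamma$-ideal containing $a$, and $(a)$ is by \lemx~\ref{M3} the smallest such, we get $(a)\subseteq B$. By (3), $(a)=M$, so $M\subseteq B\subseteq M$, i.e.\ $B=M$. Hence $M$ is its own unique generalized bi-$\Gamma$-ideal, which is exactly \defnx~\ref{1.9}.

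I do not anticipate a genuine obstacle here: each implication is a one-line application of a previously established lemma (\lemx~\ref{3.5} for the existence of $a\Gamma M\Gamma a$ as a generalized bi-$\Gamma$-ideal, \eqref{Eq5.1} for the explicit form of $(a)$, and \lemx~\ref{M3} for its minimality). The only mild subtlety worth stating carefully is that $a\Gamma M\Gamma a$ and $(a)$ are nonempty, and that every generalized bi-$\Gamma$-ideal is nonempty by definition, so that one may always choose an element $a$ to feed into (3); I would make this explicit rather than leave it implicit.
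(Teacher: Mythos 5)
Your proposal is correct and follows exactly the same cyclic chain $(1)\Rightarrow(2)\Rightarrow(3)\Rightarrow(1)$ as the paper, using \lemx~\ref{3.5} for the first implication, \eqref{Eq5.1} for the second, and the minimality of $(a)$ from \lemx~\ref{M3} for the third. The only difference is that you make the nonemptiness checks explicit, which the paper leaves implicit.
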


\begin{proof}
$(1)\Rightarrow(2)$ Assume that $M$ is a GB-simple $\Gamma$-semiring and $a\in M$.
By \lemx~\ref{3.5}, we have $a\Gamma M\Gamma a$ is a generalized bi-$\Gamma$-ideal of $M$.
Since $M$ is a GB-simple $\Gamma$-semiring, we have  $a\Gamma M\Gamma a=M$.\\
$(2)\Rightarrow(3)$ Assume that $a\Gamma M\Gamma a=M$ for all $a\in M$ and let $a\in M$.
Then, by \eqref{Eq5.1}, we have
\begin{center}
$(a)=\{a\}\cup a\Gamma M\Gamma a=\{a\}\cup M=M$.
\end{center}
$(3)\Rightarrow(1)$ Assume that $(a)=M$ for all $a\in M$, and let $A$ be a generalized bi-$\Gamma$-ideal of $M$ and $a\in A$.
Then $(a)\subseteq A$.
By assumption, we have
\begin{center}
$M=(a)\subseteq A\subseteq M$.
\end{center}
Thus $M=A$.
Therefore $M$ is a GB-simple $\Gamma$-semiring.
\end{proof}

%---------------------------------------------------------------------------------------------------%

\begin{lemma}\label{3.8}
Let $B$ be a generalized bi-$\Gamma$-ideal of a $\Gamma$-semiring $M$ and $T$ a sub-$\Gamma$-semiring of $M$.
If $T$ is a GB-simple $\Gamma$-semiring such that $T\cap B\neq \emptyset$, then $T\subseteq B$.
\end{lemma}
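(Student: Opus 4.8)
The plan is to show that $T\cap B$ is a generalized bi-$\Gamma$-ideal of $T$, and then to invoke the GB-simplicity of $T$ to force $T\cap B=T$, which is exactly the assertion $T\subseteq B$.

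First I would record the easy structural facts: by hypothesis $T\cap B$ is nonempty, and it is clearly a subset of $T$, so it is at least a candidate to be a generalized bi-$\Gamma$-ideal of $T$. Next comes the one computation. On the one hand, using $T\cap B\subseteq B$ together with $T\subseteq M$ and the fact that $B$ is a generalized bi-$\Gamma$-ideal of $M$,
\[
(T\cap B)\,\Gamma\, T\,\Gamma\,(T\cap B)\subseteq B\,\Gamma\, M\,\Gamma\, B\subseteq B .
\]
On the other hand, since $T$ is a sub-$\Gamma$-semiring of $M$ we have $T\,\Gamma\, T\subseteq T$, and applying this twice gives
\[
(T\cap B)\,\Gamma\, T\,\Gamma\,(T\cap B)\subseteq T\,\Gamma\, T\,\Gamma\, T\subseteq T .
\]
Intersecting the two inclusions yields $(T\cap B)\,\Gamma\, T\,\Gamma\,(T\cap B)\subseteq T\cap B$, so $T\cap B$ is a generalized bi-$\Gamma$-ideal of the $\Gamma$-semiring $T$.

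Finally, since $T$ is a GB-simple $\Gamma$-semiring, \defnx~\ref{1.9} tells us that the only generalized bi-$\Gamma$-ideal of $T$ is $T$ itself; hence $T\cap B=T$, i.e. $T\subseteq B$, as desired. I do not expect any real obstacle here: the proof is a short intersection computation followed by a direct appeal to the definition of GB-simplicity. The only minor points requiring care are checking that $T\cap B$ is nonempty and is contained in $T$ (so that the phrase ``generalized bi-$\Gamma$-ideal of $T$'' is meaningful), both of which are immediate from the hypotheses.
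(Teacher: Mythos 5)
Your proof is correct, but it follows a genuinely different route from the paper's. The paper fixes an element $a\in T\cap B$, applies \lemx~\ref{M3} inside the $\Gamma$-semiring $T$ to conclude that the principal generalized bi-$\Gamma$-ideal $\{a\}\cup a\Gamma T\Gamma a$ of $T$ equals $T$ by GB-simplicity, and then uses $a\in B$ and $T\subseteq M$ to get $T=\{a\}\cup a\Gamma T\Gamma a\subseteq B\cup B\Gamma M\Gamma B\subseteq B$. You instead work with the whole set $T\cap B$: the two monotonicity inclusions $(T\cap B)\Gamma T\Gamma(T\cap B)\subseteq B\Gamma M\Gamma B\subseteq B$ and $(T\cap B)\Gamma T\Gamma(T\cap B)\subseteq T\Gamma T\Gamma T\subseteq T$ show that $T\cap B$ is a generalized bi-$\Gamma$-ideal of $T$ (nonempty by hypothesis, and contained in $T$), so GB-simplicity forces $T\cap B=T$ directly. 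Both arguments are sound; your appeal to $T\Gamma T\subseteq T$ is exactly what the sub-$\Gamma$-semiring hypothesis provides once one notes that finite sums stay in $T$ by additive closure. Your version is somewhat more self-contained -- it needs only the definitions and mirrors the intersection technique already used in \lemx~\ref{1.5} and \lemx~\ref{3.18} -- whereas the paper's version reuses the explicit description of principal generalized bi-$\Gamma$-ideals from \lemx~\ref{M3}. Either is acceptable.
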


\begin{proof}
Assume that $T$ is a GB-simple $\Gamma$-semiring such that $T\cap B\neq \emptyset$ and let $a\in T\cap B$.
By \lemx~\ref{M3}, we have $\{a\}\cup a\Gamma T\Gamma a$ is a generalized bi-$\Gamma$-ideal of $T$.
Since $T$ is a GB-simple $\Gamma$-semiring, we have $\{a\}\cup a\Gamma T\Gamma a=T$.
Thus
\begin{center}
$T=\{a\}\cup a\Gamma T\Gamma a\subseteq B\cup B\Gamma M\Gamma B\subseteq B\cup B\subseteq B$.
\end{center}
Hence $T\subseteq B$.
\end{proof}

%---------------------------------------------------------------------------------------------------%

\begin{theorem}\label{1.7}
Let $M$ be a $\Gamma$-semiring, $B$ a generalized bi-$\Gamma$-ideal of $M$ and $\emptyset \neq A\subseteq M$.
Then $B\Gamma A$ and $A\Gamma B$ are generalized bi-$\Gamma$-ideals of $M$.
\end{theorem}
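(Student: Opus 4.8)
The plan is to verify directly the single defining inclusion $X\Gamma M\Gamma X\subseteq X$ for $X=B\Gamma A$ and for $X=A\Gamma B$, using the associativity axiom (4) of Definition~\ref{M1} to regroup the set-products so that a factor of the form $B\Gamma M\Gamma B$ is exposed, and then invoking the hypothesis that $B$ is a generalized bi-$\Gamma$-ideal. First I would note that $B\Gamma A$ and $A\Gamma B$ are nonempty, since $B$, $A$, and $\Gamma$ are all nonempty, so each is a legitimate candidate to be a generalized bi-$\Gamma$-ideal.

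For $B\Gamma A$: using distributivity (1)--(2) and associativity (4) of the ternary multiplication, extended to the set-products defined after Definition~\ref{M1}, I would rewrite
\[(B\Gamma A)\Gamma M\Gamma(B\Gamma A)=(B\Gamma A\Gamma M\Gamma B)\Gamma A.\]
Since $A\subseteq M$ we have $A\Gamma M\subseteq M$ (the image of the multiplication lies in $M$ and $(M,+)$ is closed), hence $B\Gamma A\Gamma M\Gamma B\subseteq B\Gamma M\Gamma B$, and because $B$ is a generalized bi-$\Gamma$-ideal this is contained in $B$. Therefore $(B\Gamma A)\Gamma M\Gamma(B\Gamma A)\subseteq B\Gamma A$, so $B\Gamma A$ is a generalized bi-$\Gamma$-ideal of $M$.

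For $A\Gamma B$ the argument is symmetric: I would regroup
\[(A\Gamma B)\Gamma M\Gamma(A\Gamma B)=A\Gamma(B\Gamma M\Gamma A\Gamma B)\subseteq A\Gamma(B\Gamma M\Gamma B)\subseteq A\Gamma B,\]
again using $M\Gamma A\subseteq M$ together with the generalized bi-$\Gamma$-ideal property $B\Gamma M\Gamma B\subseteq B$.

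The only real obstacle here is notational rather than conceptual: one must be careful that the manipulations of the set-products $B\Gamma A$, $A\Gamma M\Gamma B$, etc.\ --- which are sets of finite sums $\sum_i a_i\lambda_i b_i$ --- are legitimate, i.e.\ that distributing, regrouping via axiom (4), and monotonicity under $\subseteq$ all behave as expected for such sums. Once that bookkeeping is accepted, both inclusions are immediate, and no earlier result is needed beyond the definition of a generalized bi-$\Gamma$-ideal itself.
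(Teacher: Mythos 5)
Your proposal is correct and follows essentially the same route as the paper: both regroup $(B\Gamma A)\Gamma M\Gamma(B\Gamma A)$ as $(B\Gamma(A\Gamma M)\Gamma B)\Gamma A$ and $(A\Gamma B)\Gamma M\Gamma(A\Gamma B)$ as $A\Gamma(B\Gamma(M\Gamma A)\Gamma B)$, then use $A\Gamma M\subseteq M$ (resp.\ $M\Gamma A\subseteq M$) and the hypothesis $B\Gamma M\Gamma B\subseteq B$. Your added remarks on nonemptiness and on the bookkeeping for set-products of finite sums are sensible but not points the paper dwells on.
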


\begin{proof}
Since $B$ is a generalized bi-$\Gamma$-ideal of $M$, we have
\begin{center}
$(B\Gamma A)\Gamma M\Gamma(B\Gamma A) = (B\Gamma (A\Gamma M)\Gamma B)\Gamma A\subseteq (B\Gamma M\Gamma B)\Gamma A \subseteq B\Gamma A$
\end{center}
and
\begin{center}
$(A\Gamma B)\Gamma M\Gamma(A\Gamma B) =A\Gamma(B\Gamma (M\Gamma A)\Gamma B)\subseteq A\Gamma (B\Gamma M\Gamma B)\subseteq A\Gamma B$.
\end{center}
Therefore $B\Gamma A$ and $A\Gamma B$ are generalized bi-$\Gamma$-ideals of $M$.
\end{proof}

%---------------------------------------------------------------------------------------------------%

\begin{theorem}\label{2.0}
Let $M$ be a $\Gamma$-semiring. Then $M=a\Gamma M\Gamma a$ for all $a\in M$  if and only if  $M$ is a GB-simple $\Gamma$-semiring.
\end{theorem}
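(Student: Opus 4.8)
The plan is to note that \thmx~\ref{2.0} is nothing more than the equivalence $(1)\Leftrightarrow(2)$ already contained in \thmx~\ref{3.6}; accordingly one may simply appeal to that theorem, but since each implication is a one-line argument it seems cleaner to reproduce them directly so the statement stands on its own.

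For the forward direction I would assume $M=a\Gamma M\Gamma a$ for every $a\in M$ and take an arbitrary generalized bi-$\Gamma$-ideal $A$ of $M$. Because a generalized bi-$\Gamma$-ideal is nonempty by convention, I can fix some $a\in A$, and then $M=a\Gamma M\Gamma a\subseteq A\Gamma M\Gamma A\subseteq A\subseteq M$, which forces $A=M$. Since $A$ was an arbitrary generalized bi-$\Gamma$-ideal, $M$ is the unique one, i.e. $M$ is a GB-simple $\Gamma$-semiring.

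For the converse I would fix $a\in M$ and invoke \lemx~\ref{3.5}, by which $a\Gamma M\Gamma a$ is a generalized bi-$\Gamma$-ideal of $M$; GB-simplicity then immediately yields $a\Gamma M\Gamma a=M$, and since $a$ was arbitrary this holds for all $a\in M$.

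I do not expect any genuine obstacle here: the entire content is carried by \lemx~\ref{3.5}, the defining inclusion $A\Gamma M\Gamma A\subseteq A$ of a generalized bi-$\Gamma$-ideal, and the (harmless) convention that such ideals are nonempty, which is exactly what licenses the choice of $a\in A$ in the forward implication. In effect \thmx~\ref{2.0} is a restatement of part of \thmx~\ref{3.6}.
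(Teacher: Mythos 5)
Your proposal is correct and follows essentially the same route as the paper: the forward direction is the identical chain $M=a\Gamma M\Gamma a\subseteq A\Gamma M\Gamma A\subseteq A\subseteq M$ for $a$ in a generalized bi-$\Gamma$-ideal $A$, and the converse rests on $a\Gamma M\Gamma a$ being a generalized bi-$\Gamma$-ideal. The only cosmetic difference is that you justify the latter by citing \lemx~\ref{3.5} directly (as the paper itself does in \thmx~\ref{3.6}), while the paper's proof of \thmx~\ref{2.0} derives it from \lemx~\ref{M4} together with \thmx~\ref{1.7}; your observation that the theorem duplicates the equivalence $(1)\Leftrightarrow(2)$ of \thmx~\ref{3.6} is also accurate.
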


\begin{proof}
Assume that $a\Gamma M\Gamma a=M$ for all $a\in M$ and let $B$ is a generalized bi-$\Gamma$-ideal of $M$ and $b\in B$.
By assumption, we have
\begin{center}
$M = b\Gamma M\Gamma b \subseteq B\Gamma M\Gamma B \subseteq B\subseteq M$.
\end{center}
Hence $M=B$, so $M$ is a GB-simple $\Gamma$-semiring.

Conversely, assume that $M$ is a GB-simple $\Gamma$-semiring and $a\in M$.
By \lemx~\ref{M4} and \thmx~\ref{1.7}, we have $a\Gamma M\Gamma a$ is a generalized bi-$\Gamma$-ideal of $M$.
Since $M$ is a GB-simple $\Gamma$-semiring, we have $a\Gamma M\Gamma a=M$.
\end{proof}

%---------------------------------------------------------------------------------------------------%

\begin{theorem}\label{2.1}
Let $M$ be a $\Gamma$-semiring and $B$ a bi-$\Gamma$-ideal of $M$.
Then $B$ is a minimal generalized bi-$\Gamma$-ideal of $M$ if and only if $B$ is a GB-simple $\Gamma$-semiring.
\end{theorem}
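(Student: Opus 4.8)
The plan is to prove the two implications separately, in each case converting the statement about $B$ into the criterion of \thmx~\ref{2.0} applied to $B$ regarded as a $\Gamma$-semiring in its own right (with the operation $\cdot$ restricted to $B\times\Gamma\times B$). Note first that $B$, being a bi-$\Gamma$-ideal of $M$, is a generalized bi-$\Gamma$-ideal of $M$ by \remx~\ref{1.8}(2), so asking whether it is minimal is meaningful.

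For the direction ``$B$ GB-simple $\Rightarrow$ $B$ minimal'', I would take an arbitrary generalized bi-$\Gamma$-ideal $C$ of $M$ with $\emptyset\neq C\subseteq B$ and show $C=B$. Since $C\subseteq B\subseteq M$ and $C\Gamma M\Gamma C\subseteq C$, we obtain $C\Gamma B\Gamma C\subseteq C\Gamma M\Gamma C\subseteq C$, so $C$ is a (nonempty) generalized bi-$\Gamma$-ideal of the $\Gamma$-semiring $B$. As $B$ is GB-simple, $C=B$, which is exactly minimality.

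For the direction ``$B$ minimal $\Rightarrow$ $B$ GB-simple'', I would fix $a\in B$ and produce $a\Gamma B\Gamma a$ as a generalized bi-$\Gamma$-ideal of $M$ contained in $B$. It is nonempty, and $a\Gamma B\Gamma a\subseteq B\Gamma M\Gamma B\subseteq B$ since $a\in B\subseteq M$ and $B$ is a bi-$\Gamma$-ideal. For the defining inclusion, the computation is
\[
(a\Gamma B\Gamma a)\Gamma M\Gamma(a\Gamma B\Gamma a)=a\Gamma(B\Gamma a\Gamma M\Gamma a\Gamma B)\Gamma a\subseteq a\Gamma(B\Gamma M\Gamma B)\Gamma a\subseteq a\Gamma B\Gamma a,
\]
where the first inclusion uses $a\Gamma M\Gamma a\subseteq M$ and the second uses $B\Gamma M\Gamma B\subseteq B$. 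Minimality of $B$ then forces $a\Gamma B\Gamma a=B$; since $a\in B$ was arbitrary, \thmx~\ref{2.0} applied to $B$ gives that $B$ is GB-simple.

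The proof is short, and the only real choice to make is in the forward direction: one must use $a\Gamma B\Gamma a$ (not $a\Gamma M\Gamma a$) as the small generalized bi-$\Gamma$-ideal of $M$ inside $B$, so that after minimality collapses it to $B$ one lands precisely on the hypothesis ``$x\Gamma B\Gamma x=B$ for all $x\in B$'' of \thmx~\ref{2.0}. This is also the single point where the assumption that $B$ is a bi-$\Gamma$-ideal, rather than merely a generalized bi-$\Gamma$-ideal, is used, via $B\Gamma M\Gamma B\subseteq B$.
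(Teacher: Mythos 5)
Your proof is correct, and while the direction ``GB-simple $\Rightarrow$ minimal'' coincides with the paper's argument word for word (pass from $C\Gamma M\Gamma C\subseteq C$ to $C\Gamma B\Gamma C\subseteq C$ and invoke GB-simplicity of $B$), the direction ``minimal $\Rightarrow$ GB-simple'' takes a genuinely different route. The paper starts from an \emph{arbitrary} generalized bi-$\Gamma$-ideal $C$ of $B$, forms $C\Gamma B\Gamma C$, uses \thmx~\ref{1.7} (applied twice) to see that it is a generalized bi-$\Gamma$-ideal of $M$ contained in $C\subseteq B$, and then lets minimality force $C\Gamma B\Gamma C=B$, hence $C=B$. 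You instead verify the element-wise criterion of \thmx~\ref{2.0} for $B$: for each $a\in B$ you check directly, by the same computation as in \lemx~\ref{3.5}, that $a\Gamma B\Gamma a$ is a generalized bi-$\Gamma$-ideal of $M$ sitting inside $B$, so minimality gives $a\Gamma B\Gamma a=B$, and \thmx~\ref{2.0} applied to the $\Gamma$-semiring $B$ finishes. Both are sound; the paper's version is self-contained modulo \thmx~\ref{1.7} and handles all sub-ideals of $B$ at once, while yours buys brevity by reusing the characterization already proved in \thmx~\ref{2.0}, at the cost of having to redo the \lemx~\ref{3.5}-style inclusion for $a\Gamma B\Gamma a$ as a subset of $M$ rather than of $B$. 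One small correction to your closing remark: the inclusion $B\Gamma M\Gamma B\subseteq B$ already holds for any \emph{generalized} bi-$\Gamma$-ideal, so that is not where the bi-$\Gamma$-ideal hypothesis earns its keep; what it actually provides is that $B$ is a sub-$\Gamma$-semiring, hence a $\Gamma$-semiring in its own right, which is what makes ``$B$ is a GB-simple $\Gamma$-semiring'' meaningful and legitimizes applying \thmx~\ref{2.0} to $B$.
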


\begin{proof}
Assume that $B$ is a minimal generalized bi-$\Gamma$-ideal of $M$. By assumption, $B$ is a $\Gamma$-semiring.
Let $C$ be a generalized bi-$\Gamma$-ideal of $B$. Then
\begin{equation}\label{Eq6}
C\Gamma B\Gamma C \subseteq C\subseteq B.
\end{equation}
Since $B$ is a generalized bi-$\Gamma$-ideal of $M$ and by \thmx~\ref{1.7}, we have $C\Gamma B\Gamma C$  is a generalized bi-$\Gamma$-ideal of $M$.
Since $B$ is a minimal generalized bi-$\Gamma$-ideal of $M$, we get $C\Gamma B\Gamma C=B$.
Thus, by \eqref{Eq6}, we have $B=C$.
Hence $B$ is a GB-simple $\Gamma$-semiring.

Conversely, assume that $B$ is a GB-simple $\Gamma$-semiring.
Let $C$ be a generalized bi-$\Gamma$-ideal of $M$ such that $C\subseteq B$.
Then
\begin{center}
$ C\Gamma B\Gamma C \subseteq C\Gamma M\Gamma C\subseteq C$.
\end{center}
Thus $C$ is a generalized bi-$\Gamma$-ideal of $B$.
Since $B$ is a GB-simple $\Gamma$-semiring, we have $B=C$.
Hence $B$ is a minimal generalized bi-$\Gamma$-ideal of $M$.
\end{proof}
%---------------------------------------------------------------------------------------------------%

\begin{theorem}\label{3.11}
Let $M$ be a $\Gamma$-semiring having a proper generalized bi-$\Gamma$-ideal.
Then every proper generalized bi-$\Gamma$-ideal of $M$ is minimal if and only if the intersection of any two distinct proper generalized bi-$\Gamma$-ideals is empty.
\end{theorem}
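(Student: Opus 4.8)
The plan is to prove both implications directly, the only external input being \lemx~\ref{1.5} together with the fact that, by definition, every generalized bi-$\Gamma$-ideal is a \emph{nonempty} subset. The hypothesis that $M$ has a proper generalized bi-$\Gamma$-ideal will be carried along merely to keep both sides of the equivalence non-vacuous; it plays no essential role in the arguments below.

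For the forward implication, I would assume that every proper generalized bi-$\Gamma$-ideal of $M$ is minimal, and take two distinct proper generalized bi-$\Gamma$-ideals $B_{1}$ and $B_{2}$. Arguing by contradiction, suppose $B_{1}\cap B_{2}\neq\emptyset$. Then \lemx~\ref{1.5} shows that $B_{1}\cap B_{2}$ is a generalized bi-$\Gamma$-ideal of $M$, and it is proper since $B_{1}\cap B_{2}\subseteq B_{1}\subsetneq M$. Because $B_{1}\cap B_{2}\subseteq B_{1}$ and $B_{1}$ is minimal, we get $B_{1}\cap B_{2}=B_{1}$; symmetrically $B_{1}\cap B_{2}=B_{2}$. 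Hence $B_{1}=B_{2}$, contradicting distinctness, so $B_{1}\cap B_{2}=\emptyset$.

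For the converse, I would assume that the intersection of any two distinct proper generalized bi-$\Gamma$-ideals of $M$ is empty, and let $B$ be a proper generalized bi-$\Gamma$-ideal of $M$. To show $B$ is minimal, take any generalized bi-$\Gamma$-ideal $C$ of $M$ with $C\subseteq B$; then $C\subseteq B\subsetneq M$, so $C$ is proper. If $C\neq B$, then $C$ and $B$ are two distinct proper generalized bi-$\Gamma$-ideals, so by hypothesis $C\cap B=\emptyset$; but $C\cap B=C\neq\emptyset$ because generalized bi-$\Gamma$-ideals are nonempty, a contradiction. Therefore $C=B$, and $B$ is minimal.

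I do not expect a genuine obstacle here. The two points needing a little care are verifying that the relevant intersections remain \emph{proper} (which is immediate, being contained in a proper ideal) and recalling that generalized bi-$\Gamma$-ideals are nonempty, which is precisely what produces the contradiction in the converse direction.
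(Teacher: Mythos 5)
Your proposal is correct and follows essentially the same route as the paper: the forward direction uses \lemx~\ref{1.5} and minimality of $B_{1}$, $B_{2}$ to force $B_{1}\cap B_{2}=B_{1}=B_{2}$, and the converse derives a contradiction from $C=C\cap B=\emptyset$ for a nonempty $C$. Your explicit remarks on properness of the intersection and nonemptiness of generalized bi-$\Gamma$-ideals only make precise what the paper leaves implicit.
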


\begin{proof}
Assume that every proper generalized bi-$\Gamma$-ideal of $M$ is minimal and
let $B_{1}$ and $B_{2}$ be two distinct proper generalized bi-$\Gamma$-ideals of $M$.
By assumption, we have $B_{1}$ and $B_{2}$ are minimal.
We shall show that  $B_{1}\cap B_{2}=\emptyset$.
Suppose that $B_{1}\cap B_{2}\neq\emptyset$.
By \lemx~\ref{1.5}, we have $B_{1}\cap B_{2}$ is a proper generalized bi-$\Gamma$-ideal of $M$.
Since $B_{1}\cap B_{2}\subseteq B_{1}$ and $B_{1}\cap B_{2}\subseteq B_{2}$, we get $B_{1}\cap B_{2}=B_{1}$ and $B_{1}\cap B_{2}= B_{2}$.
Thus $B_{1}= B_{2}$ which is a contradiction.
Hence $B_{1}\cap B_{2}=\emptyset$.

Conversely, assume that the intersection of any two distinct proper generalized bi-$\Gamma$-ideals is empty.
Let $B$ be a proper generalized bi-$\Gamma$-ideal of $M$ and $C$ a generalized bi-$\Gamma$-ideals of $M$ such that $C\subseteq B$.
Suppose that $C\neq B$.
Then $C$ is a proper generalized bi-$\Gamma$-ideal of $M$.
Since $C\subset B$ and by assumption, we have $C=C\cap B=\emptyset$ which is a contradiction.
Therefore $C=B$, so $B$ is minimal.
\end{proof}

\section*{Acknowledgment}
The authors wish to express their sincere thanks to the referees for the valuable suggestions which lead to an improvement of this paper.

%%%%%%%%%%%%%%%%%%%%%%%%%%%%%%%%%%%%%%%%%%%%%%%%%%%%%%%%%%%%%%%%%%%%%%%%%%
%%%%%%%%%%%%%%%%%%%%%%%%%%%%%%%%%%%%%%%%%%%%%%%%%%%%%%%%%%%%%%%%%%%%%%%%%%

%\bibliographystyle{amsplain}
%\bibliography{references}

{\bf Received: \today}

\end{document}